\theoremstyle{plain}
\newtheorem*{theorem*}{Theorem}
\newtheorem*{lemma*} {Lemma}
\newtheorem*{corollary*} {Corollary}
\newtheorem*{proposition*} {Proposition}
\newtheorem*{conjecture*}{Conjecture}
\newtheorem{theorem}{Theorem}[section]
\newtheorem{lemma}[theorem]{Lemma}
\newtheorem{corollary}[theorem]{Corollary}
\newtheorem{maintheorem}[theorem]{Main Theorem}
\theoremstyle{definition}
\theoremstyle{remark}
\newtheorem*{remark}{Remark}
\theoremstyle{definition}
\def\be{\begin{equation}} \def\ee{\end{equation}}
\def\Q{\Bbb{Q}}
\def\Z{\Bbb{Z}}
\def\ll{\langle}
\def\rr{\rangle}
\def\sm{\setminus}
\def\bp{\begin{pmatrix}}
\def\ep{\end{pmatrix}}
\def\bn{\begin{enumerate}}
\def\en{\end{enumerate}}
\def\Hom{\operatorname{Hom}}
\def\ba{\begin{array}}
\def\ea{\end{array}}
\def\d{\delta}
\def\lk{\operatorname{lk}}
\def\fr12{\frac{1}{2}}
\def\ol{\overline}
\def\coker{\operatorname{coker}}
\def\im{\operatorname{im}}
\def\S{\Sigma}
\def\zt{\Z[t^{\pm 1}]}
\def\Kh{\operatorname{Kh}}
\def\Id{\operatorname{Id}}
\def\toiso{\xrightarrow{\simeq}}
\def\wt{\widetilde}
\begin{document}

\title{Cobordisms to weakly splittable links}

%\date{\today}

\author{Stefan Friedl}
\address{Mathematisches Institut\\ Universit\"at zu K\"oln\\   Germany}
\email{sfriedl@gmail.com}
\author{Mark Powell}
\address{Department of Mathematics\\ Indiana University Bloomington\\ IN, USA}
\email{macp@indiana.edu}

\def\subjclassname{\textup{2010} Mathematics Subject Classification}
\expandafter\let\csname subjclassname@1991\endcsname=\subjclassname
\expandafter\let\csname subjclassname@2000\endcsname=\subjclassname

\subjclass{57M25, 57M27, 57N70.}

\begin{abstract}
We show that if a link $L$  with non--zero Alexander polynomial admits a locally flat cobordism to a `weakly $m$--split link', then the cobordism must have genus at least $\lfloor \frac{m}{2}\rfloor$.  This generalises a recent result of J.~Pardon.
\end{abstract}

\maketitle

%=======================================================================
\section{Introduction}

A \emph{$k$--component link} $L$ is an oriented $k$--component one dimensional submanifold of $S^3$
with an ordering of the components. A smooth (respectively topological) \emph{cobordism} $C$ between two $k$--component links $L=L_1\,\sqcup \dots \sqcup \, L_k$ and $J = J_1\,\sqcup \dots \sqcup \, J_k$ is a
smooth (respectively locally flat) oriented surface $C=C_1\,\sqcup\dots \sqcup \, C_k\subset S^3\times [0,1]$, such that $C_i\,\cap\, S^3\times \{0\}=L_i\times \{0\}$ and $C_i\,\cap\, S^3\times \{1\}=-J_i\times \{1\}$ for $i=1,\dots,k$.

J.~Pardon \cite{Pa11} recently extended Rasmussen's knot concordance invariant \cite{Ra10} to a concordance invariant for links.
As one of the main applications of the new invariant Pardon proves the following theorem:

\begin{theorem} \label{thm:pardon} A smooth cobordism between a $\Kh$--thin link and a
link split into $m$ components must have genus at least $\lfloor \frac{m}{2}\rfloor$.
\end{theorem}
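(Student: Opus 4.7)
The plan is to invoke Pardon's concordance invariant $s$ for links from \cite{Pa11}, which extends Rasmussen's $s$-invariant for knots, and to follow the classical three-step template: establish a cobordism inequality for $s$, then compute $s$ on Kh-thin links and on $m$-split links, and compare.

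First I would verify that for any smooth cobordism $C$ of genus $g$ between two $k$-component links $L$ and $J$ one has $|s(L)-s(J)| \leq 2g$. This is proved by handle-decomposing $C$ into births, deaths, and saddles and tracking the distinguished Bar--Natan class that underlies $s$ through each elementary move; since the number of components is preserved by $C$, no extra $(k-1)$ correction term is generated by the births and deaths.

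Second I would compute $s$ on each extremal class. For a Kh-thin link $L$, Khovanov homology is concentrated on two adjacent diagonals, so the Bar--Natan class has forced bigrading and $s(L)$ is determined by classical invariants (essentially $-\sigma(L)$), in particular bounded independent of $m$. For a link $J$ geometrically split into $m$ sublinks $J_1, \dots, J_m$, the Khovanov complex factors as a tensor product, and from Pardon's definition one extracts a formula of the shape $s(J) = \sum_i s(J_i) - c(m)$ with $c(m)$ linear in $m$. Substituting into the cobordism inequality from the first step and comparing with the thin case then forces $g \geq \lfloor m/2 \rfloor$.

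The main obstacle I expect is the split-link computation. Pardon's invariant is a delicate average of quantities that individually are not concordance invariants, so extracting the precise $m$-dependence requires carefully tracking quantum and homological gradings through the tensor product decomposition of the Khovanov homology of $J$, and verifying that the ``one lost unit per split'' contribution is exact rather than merely an inequality. The cobordism inequality and the Kh-thin computation are, by contrast, direct adaptations of well-understood knot-theoretic arguments.
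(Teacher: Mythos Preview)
Your proposal reconstructs Pardon's original argument from \cite{Pa11}: establish a cobordism inequality for his link invariant, pin it down on Kh--thin links via the diagonal support of Khovanov homology, and extract an $m$-dependent shift for split links from the tensor-product structure of the Khovanov complex. (One caveat: Pardon's invariant for a $k$-component link is not a single integer $s$ but rather a function on orientation data, and it is the \emph{spread} of this function that detects splitting; your sketch will need adjusting accordingly.) The paper, however, takes a completely different route. Instead of Khovanov homology it uses the rank $r(L)=\dim_{\Q(t)}H_1(Y_L;\Q(t))$ of the Alexander module: Kh--thin implies $\Delta_L\neq 0$ and hence $r(L)=0$, while any (weakly) $m$-split link $J$ satisfies $r(J)\geq m-1$ by a Mayer--Vietoris count on the infinite cyclic cover; the main technical theorem then shows $2g\geq |r(J)-r(L)|$ by comparing Euler characteristics of the cobordism exterior with $\Q$ and with $\Q(t)$ coefficients. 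Your route is faithful to the cited source; the paper's route buys strictly more --- it applies to the larger class of links with $\Delta_L\neq 0$, to weakly $m$-split rather than merely split links, and to \emph{topological} locally flat cobordisms, where the Khovanov/Lee machinery is unavailable.
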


We will not give a precise definition of $\Kh$--thin links. Roughly speaking a link is $\Kh$--thin if its reduced Khovanov homology (see \cite{Kh00})
is concentrated on a diagonal.
Alternatively one can define $\Kh$--thin links using ordinary Khovanov homology (see \cite[Proposition~4]{Kh03}).  What is of interest to us is that we have the following inclusions:
\[ \ba{rclcl} \left\{\ba{c}\mbox{non-split}\\\mbox{alternating}\\\mbox{links}\ea\right\}
&\subset&
\left\{\ba{c}\mbox{quasi-alternating}\\\mbox{links}\ea\right\}
&\subset &
\left\{\ba{c}\mbox{$\Kh$--thin}\\\mbox{links}\ea\right\}\\[2mm]
&\subset&
\left\{\ba{c}\mbox{links with}\\\mbox{$\det(L)\ne 0$}\ea\right\}
&\subset&
\left\{\ba{c}\mbox{links with}\\\mbox{$\Delta_L(t)\ne 0$}\ea\right\}.\ea \]
Here we denote the determinant of a link $L$ by $\det(L)$, and the one--variable Alexander polynomial by $\Delta_L(t)$.
We refer to \cite{MO08,OS05} for details on the first two inclusions. For the third inclusion see e.g. \cite[Proposition~2.5]{Wa08}.
The last inclusion follows from the equality $\det(L)=|\Delta_L(-1)|$.

As indicated, it is known that these inclusions are proper inclusions: see e.g. \cite{Gr10} for the second inclusion.

Pardon asks whether the Alexander module of a link can be used to reprove  Theorem \ref{thm:pardon}
for certain classes of links (e.g. quasi--alternating links).
%can be proved using the Alexander polynomial can give an alternative approach to studying the concordance of links to split links,
%and in particular he asks whether it is possible to prove  for quasi-alternating links using the Alexander module.
%Pardon suggests that the Alexander polynomial can give an alternative approach to studying the concordance of links to split links,
%and in particular he asks whether it is possible to prove Theorem \ref{thm:pardon} for quasi-alternating links using the Alexander module.
%\Footnote{[S] The Old Version Said:\\
%Pardon Writes That `It Would Be Interesting To Try To Prove [Theorem \Ref{Thm:Pardon}] (Say, Restricted To Quasi-Alternating Links) Using The Alexander Module.'  \\
%I Don'T Quote Him Directly Now, But Somewhat More Indirectly. We Could Throw Out `Quasi--Alternating', But Then The Whole List Of Inclusions Above Wouldn'T Make All That Much Sense.
%I Think The Less Direct Way Of Citing Him Gives Him More Credit And He Looks Less Stupid, Than Before. But I Am Open For Suggestions.}
In this note we follow through on Pardon's idea, and both reprove and extend
 Theorem \ref{thm:pardon}.

In the following, we say that a link $J = J_1\,\sqcup \dots \sqcup\, J_k$ is \emph{weakly $m$--split} if there exist $m$ disjoint oriented surfaces $\S_1,\dots,\S_m$ with non--trivial boundary, embedded in $S^3$,
such that $\partial \S_1\,\sqcup \dots \sqcup\, \partial \S_m=J$.  Note that a link which splits into $m$ sublinks is in particular weakly $m$--split.
On the other hand $m$--component boundary links, which in general do not split, are nevertheless also weakly $m$--split.

Our main technical result (see Theorem \ref{mainthm2}) relates the ranks of Alexander modules of links to the genera of cobordisms
between them. This will allow us to prove the following generalisation of Pardon's theorem:

\begin{maintheorem}\label{mainthm}
Let $L$ be a link with non--zero Alexander polynomial $\Delta_L\in \zt$.  A topological cobordism between $L$ and a
weakly $m$--split link $J$ must have genus at least $\lfloor \frac{m}{2}\rfloor$.
\end{maintheorem}

\subsection*{Acknowledgements}

MP would like to thank Charles Livingston and Kent Orr for helpful discussions.
We also wish to thank the referee for carefully reading our paper.

%=======================================================================
\section{Proof of the main theorem}\label{Section:proof_main_thm}

%=======================================================================
\subsection{Preliminaries on Alexander modules}

Let $L$ be a $k$-component link. We write $Y_L:=S^3\sm \nu L$.
Given a $k$--component link $L$ and a homomorphism $\phi\colon \Z^k\to H$ to a free abelian group, we consider the coefficient system corresponding to
\[\pi_1(Y_L)\to H_1(Y_L;\Z) \cong \Z^k \xrightarrow{\phi} H \to \Z[H] \to \Q(H),\]
where the second map is the canonical isomorphism sending the $i$--th oriented meridian to $e_i$, the $i$--th standard basis element of $\Z^k$,
and where $\Q(H)$ denotes the quotient field of the group ring $\Z[H]$.  We define:
\[r(L,\phi):=\dim_{\Q(H)}H_1(Y_L;\Q(H)).\]

We denote by $\d:\Z^k\to \Z$ the `diagonal' homomorphism defined by $\d(e_i)=1$, $i=1,\dots,k$.
Note that the corresponding $\pi_1(Y_L)\to \Z$ is the unique epimorphism which sends each oriented meridian to $1$.
We write:
\[r(L):=r(L,\d:\Z^k\to \Z).\]
%\mbox{canonical epimorphism $\Z^k\to \Z$}).\]
As usual we identify the group ring $\Z[\Z]$ with $\zt$ and we denote by  $H_1(Y_L;\zt)$ the  Alexander module corresponding to the  canonical epimorphism.
Furthermore we denote the order of this Alexander module by $\Delta_L=\Delta_L(t)\in \zt$.
Note that $r(L)=0$ if and only if $\Delta_L\ne 0$.

Our proof of Theorem \ref{mainthm} hinges on the following bound on the rank of the twisted homology of a weakly $m$--split link.

\begin{lemma}\label{lemma:H_1_weakly_m_split}
Let $J$ be a weakly $m$--split link. Then
\[ r(J) \geq m-1.\]
\end{lemma}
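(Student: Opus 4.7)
The plan is to cut open the exterior $Y_J$ along the surface $\Sigma := \Sigma_1 \sqcup \cdots \sqcup \Sigma_m$ and run a Mayer--Vietoris sequence with $\Q(t)$-coefficients to bound $r(J) = \dim_{\Q(t)} H_1(Y_J;\Q(t))$ from below. I set $\Sigma' := \Sigma \cap Y_J$ (a properly embedded oriented surface with boundary on $\partial Y_J$) and $N := Y_J \setminus \nu(\Sigma')$. Note that $\Sigma$ plays the role of a (possibly disconnected) Seifert surface for $J$, so $[\Sigma'] \in H_2(Y_J,\partial Y_J;\Z)$ is Poincar\'e dual to $\delta$.

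The first key observation is that $\delta$ restricts trivially to $\pi_1(\Sigma')$ and to $\pi_1(N)$. For $\gamma \subset N$ this is immediate: $\gamma$ is disjoint from $\Sigma$, so $\delta(\gamma) = \lk(\gamma,J) = \gamma \cdot \Sigma = 0$. For $\gamma \subset \Sigma'$, push $\gamma$ off $\Sigma$ in its (globally trivialised) normal direction; the push-off is globally disjoint from $\Sigma$, giving the same conclusion. Consequently $H_*(\Sigma';\Q(t)) \cong H_*(\Sigma';\Q) \otimes_\Q \Q(t)$ and $H_*(N;\Q(t)) \cong H_*(N;\Q) \otimes_\Q \Q(t)$.

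Second, I want $N$ to be connected. I would prove this by induction on $m$: each $\Sigma_i$ is non-separating in $Y_J$ because its Poincar\'e dual $\phi_i \in H^1(Y_J;\Z)$ sends the meridian of any $J_\ell \subset \partial \Sigma_i$ to $1$ and is therefore nonzero, and after cutting along $\Sigma_1',\dots,\Sigma_{i-1}'$ the class $\phi_i$ is still nontrivial on the resulting manifold because those meridians survive and lie in $\ker \phi_1 \cap \cdots \cap \ker \phi_{i-1}$. Hence $H_0(N;\Q(t)) = \Q(t)$, while $H_0(\Sigma';\Q(t)) = \Q(t)^c$ where $c \geq m$ is the number of connected components of $\Sigma'$.

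Plugging these into the Mayer--Vietoris exact sequence
\[
H_1(N;\Q(t)) \to H_1(Y_J;\Q(t)) \xrightarrow{\partial} H_0(\Sigma';\Q(t)) \xrightarrow{\beta} H_0(N;\Q(t)) \to H_0(Y_J;\Q(t)) = 0,
\]
I would compute that $\beta$ sends each connected component of $\Sigma'$ to $(t-1)$ times the generator of $H_0(N;\Q(t))$: the two sides of $\Sigma'$ include into the unique component of $N$, but in the infinite cyclic cover they lift into sheets of $\wt N$ differing by one deck transformation, producing the factor $(t-1)$. Hence the image of $\beta$ is $1$-dimensional, $\dim \ker \beta = c - 1 \geq m - 1$, and exactness gives $r(J) \geq m - 1$. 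The main obstacle is keeping track of this $t$-factor: although the $\delta$-coefficient is trivial on $\pi_1(\Sigma')$ and $\pi_1(N)$, the two inclusions $\Sigma' \hookrightarrow N$ as the $\pm$ sides of the cut lift to different sheets of $\wt N \subset \wt{Y_J}$, and without this shift the bound would collapse to zero.
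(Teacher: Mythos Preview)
Your argument is correct and follows essentially the same route as the paper. Both proofs cut $Y_J$ along the surface $\Sigma$, observe that the complement $N=Y^\Sigma$ is connected, and read off the bound from the tail of the Mayer--Vietoris/Wang sequence: the map $H_0(\Sigma;\Q(t))\to H_0(N;\Q(t))$ has one--dimensional image (your $(t-1)$ factor is exactly the mechanism the paper uses implicitly by assembling $\wt{Y}_J$ from shifted copies of $N$), so its kernel has dimension at least $m-1$ and is hit by $H_1(Y_J;\Q(t))$. Your inductive justification that $N$ is connected and your allowance for $c\geq m$ are slightly more explicit than the paper, but the underlying argument is identical.
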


The statement of the lemma is well--known for boundary links. The standard proof for boundary links easily generalises to the case of weakly $m$--split links. For the reader's convenience we provide the details.

\begin{proof}
Since $J$ is a weakly $m$--split link, there exist $m$ disjoint oriented surfaces $\S_1,\dots,\S_m$ with non--trivial boundary,
such that $\partial \S_1\,\sqcup \dots \sqcup\, \partial \S_m=J$. In the following, by slight abuse of notation, we also denote the intersection of the surface $\S_i$ with $Y_J$ by $\S_i$, for $i=1,\dots,m$.  Note that $\S:=\S_1\,\sqcup \dots \sqcup\, \S_m$ is
dual to the canonical homomorphism $\phi\colon \pi_1(Y_J)\to \Z$ under the isomorphisms
\[H_2(Y_J,\partial Y_J;\Z) \toiso H^1(Y_J;\Z) \toiso \Hom_{\Z}(H_1(Y_J;\Z),\Z) \toiso \Hom(\pi_1(Y_J),\Z),\]
since both homomorphisms to $\Z$ send each oriented meridian to one.
In particular $\S$ is non--separating.    Since $\Q(t)$ is flat as a module over $\zt$ we have
\[H_*(Y_J;\Q(t)) \cong H_*(Y_J;\zt) \otimes_{\zt}\Q(t) \cong H_*(\wt{Y}_J;\Z) \otimes_{\zt} \Q(t),\]
where $\wt{Y}_J$ is the infinite cyclic cover of $Y_J$, constructed by cutting $Y_J$ along $\S$ to obtain $Y^\S$ and then gluing the fundamental domains $\{t^iY^\S\,|\, i \in \Z\}$ together along $\{t^i\S\,|\,i \in \Z\}$.  The Mayer-Vietoris sequence of
\[\wt{Y}_J = \bigsqcup_{i \text{ odd}}\, t^iY^\S \cup_{\bigsqcup_{i \in \Z}t^i\S} \bigsqcup_{i \text{ even}} t^iY^\S \]
is as follows:
\[ \ba{ccccccc} &&&&&H_1(\wt{Y}_J;\Z)&\\
\to& \bigoplus_{i\in\Z} H_0(t^i\S;\Z)&\to& \bigoplus_{i\in\Z} H_0(t^i Y^\S;\Z)&\to& H_0(\wt{Y}_J;\Z).\ea\]
Considering all of the homology groups as $\zt$ modules, this is equivalent to:
 \begin{align*} H_1(Y_J;\zt)&\\
\to H_0(\S;\Z) \otimes_{\Z} \zt\to H_0(Y^\S;\Z)\otimes_{\Z} \zt \to H_0(Y_J;\zt)&.\end{align*}
Tensoring with $\Q(t)$, considered as a $\zt$-module, yields:
\[ \ba{ccccccc} &&&&&H_1(Y_J;\Q(t))&\\
\to& H_0(\S;\Z) \otimes_{\Z} \Q(t)&\to& H_0(Y^\S;\Z)\otimes_{\Z} \Q(t) &\to& H_0(Y_J;\Q(t)).\ea\]
Since each $\S_i$ is connected, $H_0(\S;\Z) \cong \bigoplus_{i=1}^m\,H_0(\S_i;\Z) \cong \Z^m$, so that $H_0(\S;\Z) \otimes_{\Z} \Q(t) \cong \Q(t)^m$.  Similarly, since $Y^\S$ is connected, $H_0(Y^\S;\Z)\otimes_{\Z} \Q(t) \cong \Q(t)$.  The coefficient system $\pi_1(Y_J) \to \Z=\ll t\rr$ is non-trivial, which implies that $$H_0(Y_J;\Q(t)) \cong \coker((t-1) \colon \Q(t) \to \Q(t)) \cong 0.$$  Therefore, by exactness,
\begin{align*}
  &\ker(H_0(\S;\Z) \otimes_{\Z} \Q(t)\to H_0(Y^\S;\Z)\otimes_{\Z} \Q(t)) \cong \Q(t)^{m-1}\\ \cong & \im(H_1(Y_J;\Q(t))\to H_0(\S;\Z) \otimes_{\Z} \Q(t)),
\end{align*}
so that $\dim_{\Q(t)}(H_1(Y_J;\Q(t))) \geq m-1$ as claimed.
\end{proof}

%=======================================================================
\subsection{Cobordisms and ranks of  Alexander modules}

We start our discussion of cobordisms between links with the following elementary lemma:

\begin{lemma}
Let $L$ and $J$ be two $k$--component links. Then the following are equivalent:
\bn
\item there exists a smooth cobordism between $L$ and $J$;
\item there exists a topological cobordism between $L$ and $J$;
\item we have  $\lk(L_i,L_j)=\lk(J_i,J_j)$ for all $i,j=1,\dots,k$.
\en
\end{lemma}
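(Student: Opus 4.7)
The direction $(1)\Rightarrow(2)$ is immediate, since smooth surfaces are in particular locally flat.

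For $(2)\Rightarrow(3)$, I would fix $i\neq j$ and pass to the complement $W:=(S^3\times[0,1])\sm\nu C_j$. A standard Alexander duality computation gives $H_1(W)\cong\Z$, generated by a positive meridian of $C_j$; the inclusions $S^3\sm L_j\hookrightarrow W$ at $t=0$ and $S^3\sm J_j\hookrightarrow W$ at $t=1$ each send the canonical meridian to this generator, so that the induced maps realise $[L_i]\mapsto \lk(L_i,L_j)$ and $[J_i]\mapsto \lk(J_i,J_j)$. Since $C_i\cap C_j=\emptyset$, we have $C_i\subset W$, and $\partial C_i = L_i\sqcup(-J_i)$ forces $[L_i]=[J_i]$ in $H_1(W)$, yielding the equality of linking numbers.

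For $(3)\Rightarrow(1)$, I would build auxiliary smooth cobordisms $C_i'$ from $L_i$ to $J_i$ independently for each $i$. Any two knots are smoothly cobordant (for instance, compose a sequence of saddle moves unknotting $L_i$ with the reverse sequence knotting up into $J_i$), so such $C_i'$ exist. After a small generic perturbation, $C_i'$ and $C_j'$ (for $i\neq j$) meet transversely in finitely many double points, and the same homology argument as above, carried out in $W':=(S^3\times[0,1])\sm\nu C_j'$, shows that the signed count of intersections equals $\lk(L_i,L_j)-\lk(J_i,J_j)$, which vanishes by hypothesis. Intersection points thus pair up in opposite-sign couples, and for each such pair $(p,q)\in C_i'\cap C_j'$ I would choose an arc $\gamma$ from $p$ to $q$ in $S^3\times[0,1]$ whose interior is disjoint from $C_j'$ (possible since $C_j'$ is two-dimensional in a four-manifold) and tube $C_i'$ along $\gamma$: excise small disks of $C_i'$ around $p$ and $q$ and replace them with an annulus following $\gamma$. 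This cancels both intersection points at the expense of adding genus to $C_i'$, without introducing new intersections with $C_j'$. Iterating over all pairs and all $i\neq j$ yields pairwise disjoint smooth surfaces $C_1,\dots,C_k$ realising the desired cobordism.

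The main technical point is the Alexander duality identification $H_1(W)\cong\Z$ together with the fact that its generator pulls back to the canonical meridians at $t=0$ and $t=1$; with this in hand, both nontrivial directions reduce to standard four-dimensional surgery in the simply connected ambient space $S^3\times[0,1]$.
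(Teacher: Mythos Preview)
Your proof is correct and follows essentially the same strategy as the paper's: $(1)\Rightarrow(2)$ is trivial, $(2)\Rightarrow(3)$ comes from interpreting linking numbers homologically in the complement of one cobordism component, and $(3)\Rightarrow(1)$ is obtained by building individual surfaces, observing that the signed intersection count $C_i'\cdot C_j'=\lk(L_i,L_j)-\lk(J_i,J_j)=0$, and tubing away cancelling pairs of double points. The only cosmetic difference is that the paper takes the tubing arc $\gamma$ inside $F_j$ (so the tube is the normal circle bundle of $\gamma\subset F_j$), whereas you take $\gamma$ in the ambient $S^3\times[0,1]$ with interior disjoint from $C_j'$; both are standard and both work, though in either version one should also note (by general position) that $\gamma$ and its tube can be chosen to avoid the remaining surfaces $C_l'$ so that no new intersections are created.
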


\begin{proof}
Any smooth cobordism is also a topological cobordism, so (1) implies (2). The fact that  (2) implies (3) follows from the definition of linking numbers in terms of surfaces in the 4--ball.
Finally, assume that we have $\lk(L_i,L_j)=\lk(J_i,J_j)$ for all $i,j=1,\dots,k$.
Since $H_1(S^3\times [0,1];\Z)=0$, there exist oriented, smoothly embedded surfaces $F_1,\dots,F_k$ with $\partial F_i=L_i\cup -J_i$, $i=1,\dots,k$. We can furthermore assume that the surfaces are in general position. It follows from the definition of linking numbers that the signed count of double points satisfies:
\[ F_i\cdot F_j=\lk(L_i,L_j)-\lk(J_i,J_j)=0.\]
We can therefore pair up intersection points of $F_i$ and $F_j$.  Let $p,q \in F_i \cap F_j$ have opposite signs, and let $\gamma$ be a path in $F_j$ from $p$ to $q$. Remove an open disc neighbourhood of $p$ and of $q$ from $F_i$, add a tube $\gamma \times S^1$ to $F_i\setminus(\nu p\, \sqcup\, \nu q)$ and smooth the corners.  By repeating this operation we can arrange that the surfaces $F_1,\dots,F_k$ are disjoint, which implies (1).

\end{proof}

Given two links $L$ and $J$ with the same number of components we define, for $CAT = top,\, smooth$:
\[ \ba{rcl} g^{CAT}(L,J)&\hspace{-0.2cm}:=&\hspace{-0.2cm}\min\{ g(C)\,|\, C\mbox{ a } CAT \mbox{ cobordism between }L \mbox{ and }J\},\ea\]
where $g(C) = \frac{1}{2}(\beta_1(C) - \beta_0(C))$ is the genus of $C$.  If no cobordism between $L$ and $J$ exists, then we define $g^{CAT}(L,J):=\infty$.

We need one last definition before we can state our main theorem.
We say that a homomorphism $\phi\colon \Z^k\to H$ to a free abelian group $H$ is \emph{admissible} if $\phi(e_i)\ne 0$ for $i=1,\dots,k$, where $e_i$ is the $i$--th basis element of $\Z^k$.

Our main technical theorem is then as follows:

\begin{theorem}\label{mainthm2}
Let $L$ and $J$ be $k$--component links, let $H$ be a free abelian group and let $\phi \colon \Z^k \to H$ be admissible. Then
\[ g^{top}(L,J) \geq \frac{1}{2}\left|r(J,\phi)-r(L,\phi)\right|.\]
\end{theorem}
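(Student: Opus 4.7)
The plan is to analyse the topological $4$-manifold exterior $W := (S^3 \times [0,1]) \setminus \nu C$ of the cobordism $C$, and to bound $|r(J,\phi) - r(L,\phi)|$ by comparing the twisted homology of $Y_L$, $Y_J$ and $W$ with coefficients in $Q := \Q(H)$. The boundary of $W$ decomposes as $\partial W = Y_L \cup F \cup Y_J$, where $F := \partial\nu C \cap \operatorname{int}(S^3 \times [0,1])$ is a $3$-manifold homeomorphic to $C \times S^1$ (the normal bundle of the orientable surface $C$ in $S^3 \times [0,1]$ is trivial). The group $H_1(W;\Z)$ is generated by meridians of the connected components of $C$; each such meridian on a component touching $L$ (or $J$) is identified with a meridian of the corresponding component of $L$ (or $J$), so $\phi$ extends to a homomorphism $\phi_W : H_1(W;\Z) \to H$, and one further arranges $\phi_W$ to be non-zero on the meridians of any closed components of $C$. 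Admissibility of $\phi$ then guarantees that the resulting local system is non-trivial on each of $W$, $Y_L$, $Y_J$, and on every $S^1$-fibre of $F \to C$.

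The key preliminary observation is the vanishing $H_*(F;Q) = 0$: the meridional generator of each $S^1$-fibre of $F \to C$ acts on $Q$ as multiplication by a non-zero element of $H$, so $H_*(S^1;Q)=0$ on each fibre, and a K\"unneth argument for the trivial $S^1$-bundle $F \cong C \times S^1$ gives $H_*(F;Q) = 0$. The same reasoning gives $H_*(T_L;Q) = H_*(T_J;Q) = 0$ for the torus pieces separating $F$ from $Y_L$ and $Y_J$. Combined with $H_0(W;Q) = H_0(Y_L;Q) = H_0(Y_J;Q) = 0$ from admissibility, and Poincar\'e--Lefschetz duality on the $3$-manifolds $Y_L, Y_J$, one reads off $\dim_Q H_i(Y_L;Q) = r(L,\phi)$ for $i=1,2$ (and analogously for $J$), as well as the Euler characteristic identity $\chi(W;Q) = \chi(W) = -\chi(C) = 2g(C)$.

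The core argument then combines the long exact sequences of the pairs $(W,Y_L)$ and $(W,Y_J)$ with $Q$-coefficients. Using $H_*(F;Q)=0$, Lefschetz duality $H_i(W,Y_L;Q)\cong H^{4-i}(W,F\cup Y_J;Q)$, and the triple $(W,F\cup Y_J,Y_J)$ one checks $\dim_Q H_i(W,Y_L;Q) = \dim_Q H_{4-i}(W,Y_J;Q)$. A Morse function on $C$ with $h_i$ critical points of index $i$ induces a relative handle decomposition of $W$ over $Y_L \times [0,1]$ with $h_0$ one-handles, $h_1$ two-handles and $h_2$ three-handles, bounding $\dim H_i(W,Y_L;Q) \leq h_{i-1}$. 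From the $(W,Y_L)$ sequence one obtains $\dim H_1(W;Q) \geq r(L,\phi) - \dim H_2(W,Y_L;Q)$, and from the $(W,Y_J)$ sequence $\dim H_1(W;Q) \leq r(J,\phi) + \dim H_1(W,Y_J;Q)$, yielding
\[
r(L,\phi) - r(J,\phi) \leq \dim H_2(W,Y_L;Q) + \dim H_1(W,Y_J;Q),
\]
and the symmetric inequality. The main obstacle I foresee is converting this into the clean bound $\leq 2g(C)$ when the handle decomposition has $h_0$ or $h_2$ non-zero, i.e.\ when some components of $C$ fail to touch both $L$ and $J$. The resolution should combine the handle-count bounds with the Euler characteristic identity $\dim H_1(W,Y_L;Q) - \dim H_2(W,Y_L;Q) + \dim H_3(W,Y_L;Q) = -2g(C)$ and the duality symmetry $\dim H_i(W,Y_L;Q) = \dim H_{4-i}(W,Y_J;Q)$, so that the $h_0$ and $h_2$ contributions enter only through $2g(C)$.
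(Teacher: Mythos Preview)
Your setup and preliminary facts ($H_*(F;Q)=0$, $\chi(W)=2g(C)$, the duality $\dim H_i(W,Y_L;Q)=\dim H_{4-i}(W,Y_J;Q)$) are correct and match the paper's. The gap is exactly where you flag it, and your proposed resolution does not work. Writing $a_i=\dim_Q H_i(W,Y_L;Q)$, your two long exact sequences give $r(L,\phi)-r(J,\phi)\le a_2+a_3$, while the Euler identity reads $a_2=2g+a_1+a_3$; hence $a_2+a_3=2g+a_1+2a_3\ge 2g$, which is the wrong direction. No combination of the handle bounds $a_i\le h_{i-1}$ and the duality symmetry repairs this: one genuinely needs $a_1=a_3=0$ as an input, not as a consequence.

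There are two ways to supply this. In keeping with your approach: by the paper's definition of cobordism each $C_i$ is \emph{connected} with exactly one boundary circle on each side, so after an isotopy the height function on $C$ may be taken to have only index-$1$ critical points, i.e.\ $h_0=h_2=0$; then $a_1\le h_0=0$ and $a_3\le h_2=0$ and your argument concludes. (Your worry about ``components of $C$ failing to touch both $L$ and $J$'' is a misreading of the hypotheses; also be aware that arranging such a Morse function, and indeed the handle-decomposition translation from $C$ to $W$, requires some care in the locally flat category.) The paper instead bypasses Morse theory with a purely algebraic step: since $H_i(W,Y_L;\Z)=0$ for $i\le 1$ (this follows from the integral homology computation), a chain-lifting lemma \cite[Proposition~2.10]{COT03} gives $H_i(W,Y_L;Q)=0$ for $i\le 1$ directly, hence $a_1=0$; by symmetry $H_1(W,Y_J;Q)=0$, so $a_3=0$ by your duality. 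From there the paper routes the endgame through $H_1(\partial W;Q)$ and $H_2(W;Q)$ rather than the relative groups, but once $a_1=a_3=0$ is in hand the two versions are equivalent.
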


\begin{proof}[Proof of Theorem \ref{mainthm} assuming Theorem \ref{mainthm2}]
Let $L$ be a link with non--zero Alexander polynomial $\Delta_L\in \zt$ and let $C$ be a topological cobordism to a
weakly $m$--split link $J$.
Recall that a link $L$ has non--zero single variable Alexander polynomial $\Delta_L(t)$ if and only if $r(L)=0$.
We apply Theorem \ref{mainthm2} to the admissible `diagonal' homomorphism $\delta \colon \Z^k \to \Z$. We deduce that $g(C) \geq \frac{1}{2}r(J)$, so that $g(C) \geq \frac{m-1}{2}$ by Lemma \ref{lemma:H_1_weakly_m_split}.
\end{proof}

\begin{remark}
\bn
\item  Kawauchi \cite[Theorem~A]{Ka78} showed that if $L$ and $J$ are in fact \emph{concordant}, i.e. cobordant via annuli,
 then $r(L,\Id)=r(J,\Id)$.  Kawauchi's proof is easily modified to show that his result holds for any admissible $\phi$.
 Our theorem can therefore be thought of as a generalisation of Kawauchi's result to the case $r(L,\phi)\ne r(J,\phi)$.
\item It is straightforward to verify that our proof generalises to the consideration
of links $L, J$ in integral homology 3--spheres $M, N$ which are cobordant via a cobordism in
an integral homology $S^3\times [0,1]$ with boundary $M \cup -N$.
\item
 Let $L$ and $J$ be two $k$--component links.
 The Gordian distance $d(L,J)$ is defined as the minimal number of crossing changes needed to turn $L$ into $J$.  The \emph{intra--component} Gordian distance $d^{comp}(L,J)$ is defined to be the minimal number of crossing changes which involve the same component of the link, that are needed to transform $L$ to $J$.  If no such sequence of moves exists, then we say that $d^{comp}(L,J) = \infty$.
By a standard trick which replaces a neighbourhood of a double point of an immersed surface with a twisted annulus, one sees that $d^{comp}(L,J)\geq \,g^{smooth}(L,J)$.

Given a link $L$ we denote by $m(L)$ the minimal number of generators of the Alexander module $H_1(Y_L;\zt)$.
Kawauchi \cite[Theorem~2.3]{Ka96} showed that
 \[ d(L,J)\geq |m(L)-m(J)|.\]
Theorem \ref{mainthm2} and Kawauchi's result now fit into the following diagram
\[ \xymatrix{ d^{comp}(L,J) \ar[d]\ar[r] & g^{smooth}(L,J)\ar[r] & g^{top}(L,J) \ar[d]  \\
d(L,J) \ar[r] & |m(L)-m(J)| \ar[r]& \frac{1}{2}|r(L)-r(J)|}\]
where an arrow $A\to B$ indicates that $B$ is a lower bound on $A$.  Our result is therefore seen to be independent of Kawauchi's.
 \en
\end{remark}

%=======================================================================
\subsection{Proof of Theorem \ref{mainthm2}}

In this section we will give a proof of Theorem \ref{mainthm2}.
Throughout, we let
 $C$ be a topological cobordism between two $k$--component links $L$ and $J$ and we denote the genus of $C$ by $g$.
%recall that
%\[ g = \frac{1}{2}(\b_1(C)-\b_0(C)).\]
We write $$X_C:=(S^3\times [0,1])\sm \nu C.$$
We now have the following lemma relating the integral homology groups of $X_C$, $Y_L$, $Y_J$, $C$, $L$ and $J$.

\begin{lemma}\label{lem:Zhomology}
The integral homology of $X_C$ is given by:
\[H_i(X_C;\Z) \cong  \begin{cases}
  \Z & i=0; \\ \Z^k & i=1; \\ \Z^{2g+k-1} & i=2; \\ 0 & i \geq 3.
\end{cases}\]
Furthermore the inclusion induced maps $H_1(Y_L;\Z)\to H_1(X_C;\Z)$ and $H_1(Y_{J};\Z)\to H_1(X_C;\Z)$
are isomorphisms, such that the image of the $i$--th meridian of $L$ in $H_1(X_C;\Z)$ agrees with the image of the $i$--th meridian of $J$.
\end{lemma}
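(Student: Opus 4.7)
My plan is to obtain the homology of $X_C$ from the long exact sequence of the pair $(W, X_C)$ with $W := S^3 \times [0,1]$, using excision and the Thom isomorphism for a tubular neighbourhood of $C$. Since $W \simeq S^3$, only $H_0(W) = H_3(W) = \Z$ are non-zero. Because $C$ is a locally flat oriented surface in the topological $4$-manifold $W$, Freedman--Quinn provides a closed tubular neighbourhood $\nu C$ which is an oriented $D^2$-bundle over $C$, so excision followed by the Thom isomorphism yields
\[ H_i(W, X_C;\Z) \cong H_{i-2}(C;\Z). \]

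I may assume that each $C_i$ is connected, since any closed components of $C_i$ can be discarded without affecting the statement. Each $C_i$ is then a compact oriented genus $g_i$ surface with two boundary circles $L_i$ and $-J_i$, so $H_0(C)=\Z^k$, $H_1(C)=\Z^{2g+k}$, $H_2(C)=0$, where $g = \sum_i g_i$. Feeding these into the long exact sequence of the pair immediately gives $H_i(X_C)=0$ for $i \geq 4$ and reduces the remaining computation to the segment
\[ 0 \to H_3(X_C) \to \Z \xrightarrow{\alpha} \Z^{2g+k} \to H_2(X_C) \to 0 \to \Z^k \to H_1(X_C) \to 0. \]

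The decisive point is the injectivity of $\alpha$. Geometrically $\alpha$ is intersection with $C$: a generator of $H_3(W)$, represented by $S^3 \times \{1/2\}$, is sent to $\sum_i [L_i] \in H_1(C)$. Each $[L_i]$ is primitive in $H_1(C_i) = \Z^{2g_i+1}$, because a properly embedded arc in $C_i$ joining $L_i$ to $J_i$ has algebraic intersection number $\pm 1$ with $[L_i]$. Hence $\alpha$ is injective, and the sequence delivers $H_3(X_C)=0$, $H_2(X_C) \cong \Z^{2g+k-1}$, and $H_1(X_C) \cong \Z^k$.

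For the meridian assertions, the connecting map $H_0(C) \to H_1(X_C)$ sends a point of $C_i$ to the meridian $\mu_i$ of $C_i$, and these $\mu_i$ form the basis of $H_1(X_C)$ just identified. Including $Y_L$ at level $0$ carries the $i$-th meridian of $L_i$ to a small loop linking $L_i \subset \partial C_i$, i.e.\ to $\mu_i$; the same holds for $J$, and since $L_i$ and $J_i$ both lie on the same component $C_i$, their meridians coincide in $H_1(X_C)$. The only genuinely non-trivial ingredient is the topological Thom isomorphism for the locally flat surface $C$, which I expect to be the main obstacle; but this is standard after Freedman--Quinn's construction of normal bundles for locally flat surfaces in topological $4$-manifolds.
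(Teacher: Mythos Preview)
Your argument is correct and, at bottom, arrives at exactly the same long exact sequence
\[
\cdots \to H_{i-1}(C) \to H_i(X_C) \to H_i(S^3\times I) \to H_{i-2}(C) \to \cdots
\]
that the paper uses. You obtain it directly from the pair $(W,X_C)$ together with excision and the Thom isomorphism for the (trivial) normal $D^2$--bundle of $C$; the paper instead reaches the same sequence by threading through Poincar\'e--Lefschetz duality for $Y_L$ and for $X_C$, and assembling three commutative diagrams relating $H_i(X_C)$, $H^{4-i}(S^3\times I,\partial)$ and $H_{i-1}(C)$. Your derivation is considerably shorter and makes the geometric content (the map $H_3(W)\to H_1(C)$ is transverse intersection with $C$) more transparent; the paper's route, on the other hand, avoids invoking normal bundles for locally flat surfaces and works entirely with duality statements for manifolds with boundary. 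Both proofs then finish in the same way, by checking that $\sum_i[L_i]$ is primitive in $H_1(C)$, and your arc argument is the geometric counterpart of the paper's remark that this class is ``indivisible''.

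One small point: the sentence ``I may assume that each $C_i$ is connected, since any closed components can be discarded'' is not quite a justification---discarding closed components does not by itself rule out the case where $L_i$ and $J_i$ lie on different components of $C_i$, in which case $[L_i]=0$ in $H_1(C_i)$ and your injectivity step fails. The paper's definition of a cobordism tacitly takes each $C_i$ to be a connected surface (so that the decomposition $C=C_1\sqcup\cdots\sqcup C_k$ is the decomposition into components), and under that reading your argument goes through; but you should state this assumption explicitly rather than appeal to discarding closed pieces.
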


This lemma can be seen as a variation on Alexander duality in a ball.
The statement is well-known to the experts, but we give a proof for the reader's convenience.

\begin{proof}
In this proof, all homology groups are with $\Z$--coefficients.  We therefore allow ourselves to omit the coefficients from the notation.
In the following we identify $L$ with $L\times \{0\} \subset S^3 \times [0,1]$ and $J$ with $J\times \{1\}$.
We will also write $S_L=S^3\times \{0\}$ and $S_J=S^3\times \{1\}$.
Finally we write $\eta L = L\times D^2 \subset S_L$, $\eta J=J\times D^2 \subset S_J$ and $\eta C=C\times D^2 \subset S^3 \times [0,1]$.

We first consider the following commutative diagram,
where the horizontal isomorphisms are given by Poincar\'e duality and excision:
\begin{equation}\label{eqn:comm_diagram1}
\ba{cccccccc}
H_i(Y_L)&\cong& H^{3-i}(Y_L,\partial Y_L)&\cong&H^{3-i}(S_L,\eta L)\\
\downarrow&&\downarrow&&\downarrow\\
H_i(X_C)&\cong &H^{4-i}(X_C,\partial X_C)&\cong&H^{4-i}(S^3\times I,
S_L \cup \eta C\cup S_J).\ea\end{equation}
The first vertical map is induced by inclusion.  The other vertical maps in the diagram above, and indeed for all but one vertical map in the next two diagrams, are given by a composition of excision and maps from long exact sequences of certain pairs.  For example, the second vertical map is given by:
\[H^{3-i}(Y_L,\partial Y_L) \cong H^{3-i}(\partial X_C, Y_J\, \cup\, C \times S^1) \to H^{3-i}(\partial X_C) \to H^{4-i}(X_C,\partial X_C).\]

Next, we have the commutative diagram:
 \begin{equation}\label{eqn:comm_diagram2}
\ba{ccccccccc}
H^{2-i}(\eta L)&\hspace{-0.28cm}\to\hspace{-0.28cm} & H^{3-i}(S_L,\eta L)&\hspace{-0.28cm}\to\hspace{-0.28cm} & H^{3-i}(S_L) \\
\downarrow&&\downarrow&&\downarrow\\
H^{3-i}(\eta C,\eta L\hspace{-0.07cm}\cup\hspace{-0.07cm} \eta J)&\hspace{-0.28cm}\to\hspace{-0.28cm}&
H^{4-i}(S^3\hspace{-0.07cm}\times\hspace{-0.07cm} I,
S_L\hspace{-0.07cm}\cup\hspace{-0.07cm} \eta C\hspace{-0.07cm}\cup\hspace{-0.07cm} S_J)&\hspace{-0.28cm}\to\hspace{-0.28cm}& H^{4-i}(S^3\hspace{-0.07cm}\times\hspace{-0.07cm} I,S_L\hspace{-0.07cm}\cup\hspace{-0.07cm} S_J).\ea \end{equation}
Here, the top row is part of the long exact sequence in cohomology of the pair $(S^3,\eta L)$.  The bottom row belongs to the long exact sequence of the triple $$(S^3 \times I, S_L\, \cup\, \eta C \, \cup\, S_J,S_L\, \cup\, S_J),$$  noting that $$H^{3-i}(S_L\, \cup\, \eta C \, \cup\, S_J,S_L\, \cup\, S_J) \cong H^{3-i}(\eta C,\eta L\, \cup\, \eta J)$$
by excision.
Our last commutative diagram is as follows:
\begin{equation}\label{eqn:comm_diagram3}
\ba{cccccc}
H_{i-1}(L)&\cong & H^{2-i}(L)&\cong&H^{2-i}(\eta L)\\
\downarrow&&\downarrow&&\downarrow\\
H_{i-1}(C)&\cong &H^{3-i}(C,L\cup J)&\cong&H^{3-i}(\eta C,\eta L\cup \eta J).\ea \end{equation}
Here the first vertical map is induced by inclusion, while the other two vertical maps arise as described above.

Putting the  bottom rows of the diagrams (\ref{eqn:comm_diagram1}) and (\ref{eqn:comm_diagram3})
together with the long exact sequence in cohomology corresponding to the bottom row of (\ref{eqn:comm_diagram2}),
we obtain the following
long exact sequence:
\[ \ba{cccccc} &&&&& H^{3-i}(S^3\times I,S_L\cup S_J)\\
 \to & H_{i-1}(C)& \to& H_i(X_C) &\to& H^{4-i}(S^3\times I,S_L\cup S_J).\ea\]
Finally, note that the map
\[H^1(S^3 \times I, S_L\, \cup \,S_J) \to H^{1}(\eta C,\eta L\cup \eta J) \cong H_1(C)\]
sends the generator of $H^1(S^3 \times I, S_L\, \cup \,S_J)$ to an indivisible  element in $H_1(C)$,
namely the element corresponding to the sum of the oriented curves $C\cap S_L$.
We can now find the homology groups of $X_C$, by a straightforward calculation, to be as claimed.

When $i=1$, diagram (\ref{eqn:comm_diagram2}) extended one to the left becomes:
\begin{equation}\label{eqn:comm_diagram4}
\ba{ccccccccccc}
0&\hspace{-0.2cm}\to\hspace{-0.2cm} & H^{1}(\eta L)&\hspace{-0.2cm}\toiso\hspace{-0.2cm} & H^{2}(S_L,\eta L)&\hspace{-0.2cm}\to\hspace{-0.2cm} & 0\\
 &&\downarrow&&\downarrow\\
0 &\hspace{-0.2cm}\to\hspace{-0.2cm} &H^{2}(\eta C,\eta L\cup \eta J)&\hspace{-0.2cm}\toiso\hspace{-0.2cm}&
H^{3}(S^3\times I,
S_L\cup \eta C\cup S_J)&\hspace{-0.2cm}\to\hspace{-0.2cm}& 0.\ea \end{equation}
The second statement of the lemma then follows from combining the commutative diagrams (\ref{eqn:comm_diagram1}), (\ref{eqn:comm_diagram3}) and (\ref{eqn:comm_diagram4}), the fact that the inclusion map $L\to C$ induces an isomorphism on 0--th homology, and the observation that the r\^{o}les of $L$ and $J$ can be reversed, together with a careful consideration of orientations.
\end{proof}

\begin{corollary}\label{cor:euler_char}
The Euler characteristic of $X_C$ is $\chi^{\Q}(X_C) =  2g$.
\end{corollary}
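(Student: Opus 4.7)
The plan is to derive the corollary as an immediate numerical consequence of Lemma \ref{lem:Zhomology}. First I would observe that every integral homology group of $X_C$ computed in that lemma is either zero or free abelian of a specified rank: $H_0 \cong \Z$, $H_1 \cong \Z^k$, $H_2 \cong \Z^{2g+k-1}$, and $H_i = 0$ for $i \geq 3$. Since these groups are torsion-free, the universal coefficient theorem gives
\[ \dim_{\Q} H_i(X_C;\Q) \;=\; \rk_{\Z} H_i(X_C;\Z) \]
for every $i$, so the rational Betti numbers of $X_C$ are $1,\, k,\, 2g+k-1,\, 0,\, 0,\dots$.

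Next I would compute the rational Euler characteristic as the alternating sum of these Betti numbers:
\[ \chi^{\Q}(X_C) \;=\; 1 - k + (2g + k - 1) - 0 + 0 - \cdots \;=\; 2g. \]
The cancellation of the $k$ terms is what ensures that the answer depends only on the genus of the cobordism and not on the number of link components; this is the only nontrivial thing to notice, and it is automatic from the ranks already extracted in Lemma \ref{lem:Zhomology}. There is no real obstacle here: the corollary is essentially just bookkeeping on the output of the previous lemma, and no further geometric or algebraic input is required.
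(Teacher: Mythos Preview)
Your proposal is correct and matches the paper's approach: the corollary is stated immediately after Lemma~\ref{lem:Zhomology} with no separate proof, precisely because it is just the alternating sum of the Betti numbers computed there. There is nothing to add.
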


From here on we fix an admissible homomorphism $\phi\colon \Z^k\to H$ to a free abelian group.  We write $l:=r(L,\phi)$ and $j:=r(J,\phi)$.  Without loss of generality we can assume that $j\geq l$.

Lemma \ref{lem:Zhomology} implies that the canonical isomorphisms $H_1(Y_L;\Z)\cong \Z^k$ and $H_1(Y_{J};\Z)\cong \Z^k$ extend uniquely to an isomorphism $H_1(X_C;\Z)\cong \Z^k$.
Therefore we can extend the coefficient systems corresponding to $\phi$ over $X_C$, and use the resulting coefficient system to define the twisted homology and cohomology of $X_C$.

Eventually the goal is to show that the difference of the dimensions of the $\Q(H)$--homologies of $L$ and $J$ gives a lower bound
on the second $\Q(H)$--homology of $X_C$, which in turn will give a lower bound on the genus of $C$.

With this in mind, we proceed to collect some facts about the homology of $X_C$ with $\Q(H)$ coefficients.
In order to do this we recall the following lemma (see e.g. \cite[Proposition~2.10]{COT03}).

\begin{lemma}\label{lem:cot}
Let $B\to A$ be an inclusion of connected spaces such that $H_i(A,B;\Z)=0$ for $i=0,\dots,r$.
Let $\pi_1(A)\to F$ be a homomorphism to a free abelian group $F$.  Then $H_i(A,B;\Q(F))=0$ for $i=0,\dots,r$.
\end{lemma}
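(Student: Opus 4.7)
The natural approach is induction on the rank of the free abelian group $F$. The base case $\rk F = 0$ is immediate: $\Q(F) = \Q$ carries the trivial $\pi_1(A)$-action, so $H_i(A, B; \Q) \cong H_i(A, B; \Z) \otimes_{\Z} \Q = 0$ for $i \leq r$ by flatness of $\Q$ over $\Z$.

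For the inductive step, split $F = F' \oplus \langle t \rangle$ with $\rk F' < \rk F$, and let $\phi'$ denote the composition $\pi_1(A) \xrightarrow{\phi} F \twoheadrightarrow F'$. The inductive hypothesis applied to $\phi'$ gives $H_i(A, B; \Q(F')) = 0$ for $i \leq r$. The key move is to introduce the intermediate ring $R := \Q(F')[t^{\pm 1}]$, which is a PID sitting between $\Q[F]$ and $\Q(F) = \operatorname{Frac}(R)$, and to exploit the short exact sequence of $\pi_1(A)$-modules
\[ 0 \to R \xrightarrow{t-1} R \to R/(t-1)R \cong \Q(F') \to 0, \]
where $\pi_1(A)$ acts on $R$ via $\phi$ and, compatibly, on $\Q(F')$ via $\phi'$. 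Since the cellular chain complex of the universal cover of $A$ relative to $B$ is a complex of free $\Z[\pi_1(A)]$-modules, this produces a short exact sequence of twisted chain complexes. The resulting long exact sequence in homology, combined with the inductive vanishing of $H_*(A,B;\Q(F'))$ in degrees $\le r$, shows that multiplication by $t-1$ is surjective on $H_i(A, B; R)$ for $i \leq r$.

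Assuming $(A, B)$ is a finite CW pair (which is the case in our application to $X_C$), $H_i(A, B; R)$ is finitely generated over the PID $R$, and the structure theorem writes it as $R^{s_i} \oplus T_i$ with $T_i$ torsion. Surjectivity of $t-1$ forces $s_i = 0$, since $t-1$ acts non-surjectively on $R$ itself; hence $H_i(A, B; R)$ is $R$-torsion for $i \leq r$. Then flatness of $\Q(F)$ over $R$ yields
\[ H_i(A, B; \Q(F)) \cong H_i(A, B; R) \otimes_R \Q(F) = 0 \]
for $i \leq r$, since a torsion module over an integral domain vanishes after tensoring with its fraction field.

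The main subtlety is the passage from ``$t-1$ surjective'' to ``$R$-torsion'', which genuinely requires both finite generation of the intermediate homology and the PID structure of $R$; without either, surjectivity alone is insufficient (witness localizations of $R$ that invert $t-1$). Once this step is in hand, the rest is a bookkeeping exercise verifying that the $\pi_1(A)$-actions on the three coefficient modules in the short exact sequence are compatible under the quotient $R \twoheadrightarrow \Q(F')$, so that the short exact sequence really does descend to a short exact sequence of twisted chain complexes.
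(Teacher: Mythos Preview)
The paper does not actually prove this lemma; it merely recalls it and cites \cite[Proposition~2.10]{COT03}. Your inductive argument is correct and self-contained for finite CW pairs, and since every application in the paper is to the compact pair $(X_C,Y_L)$ or $(X_C,Y_J)$, this suffices. A couple of remarks on how your route compares with the cited source: the argument in \cite{COT03} is formulated for arbitrary poly-torsion-free-abelian groups and their Ore localisations, and proceeds via a chain-level lifting argument (using Strebel's theorem that a map of projective $\Z\Gamma$-modules which becomes injective upon augmentation is already injective) rather than by induction on rank. That approach avoids any finite-generation hypothesis on the chain complex, whereas your step ``$t-1$ surjective implies torsion'' genuinely needs finite generation over the PID $R$, as you correctly flag. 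On the other hand, your argument is more elementary in that it uses only the structure theorem over a PID and the flatness of a fraction field, with no input from the theory of PTFA groups; for the free abelian case at hand this is arguably the cleaner route.

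One small point of care in your write-up: in the long exact sequence coming from $0\to R\xrightarrow{t-1}R\to \Q(F')\to 0$, the vanishing of $H_i(A,B;\Q(F'))$ for $i\le r$ gives surjectivity of $t-1$ on $H_i(A,B;R)$ only for $i\le r$ (which is all you need), and you should perhaps make explicit that multiplication by the central element $t-1$ respects the decomposition $R^{s_i}\oplus T_i$, so that surjectivity forces $s_i=0$. You essentially say this, but spelling it out removes any doubt.
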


The implications of Lemma \ref{lem:cot} which will be relevant for us are given in the next corollary.

\begin{corollary}\label{cor:chain_lifting_consequences}
We have:
\begin{enumerate}
  \item $H_1(X_C,Y_L;\Q(H)) \cong 0$; and
  \item $\dim_{\Q(H)} (H_1(X_C;\Q(H))) \leq l$.
\end{enumerate}
\end{corollary}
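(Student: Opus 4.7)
The plan is to deduce both statements essentially formally from Lemma \ref{lem:Zhomology} and Lemma \ref{lem:cot}, via the long exact sequence of the pair $(X_C, Y_L)$.

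For part (1), I would first verify the hypotheses of Lemma \ref{lem:cot} with $A = X_C$, $B = Y_L$, $F = H$, and $r = 1$. Both $X_C$ and $Y_L$ are connected, and the inclusion-induced map on $H_0(-;\Z)$ is an isomorphism $\Z \toiso \Z$, so $H_0(X_C, Y_L; \Z) = 0$. By the second part of Lemma \ref{lem:Zhomology}, the inclusion-induced map $H_1(Y_L; \Z) \to H_1(X_C; \Z)$ is an isomorphism, so the long exact sequence of the pair together with $H_0(X_C, Y_L; \Z) = 0$ gives $H_1(X_C, Y_L; \Z) = 0$. Applying Lemma \ref{lem:cot} then yields $H_1(X_C, Y_L; \Q(H)) \cong 0$ as claimed.

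For part (2), I would write down the long exact sequence of the pair $(X_C, Y_L)$ with $\Q(H)$ coefficients:
\[ \cdots \to H_1(Y_L; \Q(H)) \to H_1(X_C; \Q(H)) \to H_1(X_C, Y_L; \Q(H)) \to \cdots \]
Part (1) shows that the third term vanishes, so the map $H_1(Y_L; \Q(H)) \to H_1(X_C; \Q(H))$ is surjective. Since $\dim_{\Q(H)} H_1(Y_L; \Q(H)) = r(L,\phi) = l$ by definition, we obtain $\dim_{\Q(H)} H_1(X_C; \Q(H)) \leq l$.

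There is no real obstacle here: the only content is the input from Lemma \ref{lem:Zhomology} (ensuring the integral excision/vanishing hypotheses) and the fact, supplied by Lemma \ref{lem:cot}, that the vanishing of relative integral homology through degree one can be promoted to vanishing of relative $\Q(H)$--homology. The harder work, namely extending the coefficient system from $Y_L$ over $X_C$, was already done immediately before the statement using the second assertion of Lemma \ref{lem:Zhomology}; once that is in place, the corollary is immediate.
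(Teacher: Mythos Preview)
Your proof is correct and follows essentially the same approach as the paper: both combine Lemma~\ref{lem:Zhomology} with Lemma~\ref{lem:cot} to obtain the vanishing of $H_1(X_C,Y_L;\Q(H))$, and then use the long exact sequence of the pair to deduce the surjectivity of $H_1(Y_L;\Q(H)) \to H_1(X_C;\Q(H))$ and hence the dimension bound. Your write-up is slightly more explicit in verifying the integral vanishing hypotheses for Lemma~\ref{lem:cot}, but the argument is the same.
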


\begin{proof}
It follows from Lemma \ref{lem:Zhomology} combined with Lemma \ref{lem:cot}  that
$$H_i(X_C,Y_K;\Q(H)) \cong 0$$
 for $K=L,J$ and $i=0,1$.  With $K=L$, this proves (1).  Moreover, the long exact sequence of the pair \[H_1(Y_L;\Q(H)) \to H_1(X_C;\Q(H)) \to H_1(X_C, Y_L;\Q(H))\] shows us that the map $H_1(Y_L;\Q(H)) \to H_1(X_C;\Q(H))$ is surjective.  But $\dim_{\Q(H)} (H_1(Y_L;\Q(H))) =l$, which implies (2).
\end{proof}

\begin{lemma}\label{lem:Qt_homology}
The homology of $X_C$ twisted over $\Q(H)$ is given by:
\[H_i(X_C;\Q(H)) \cong 0\]
for $i \neq 1,2$.
\end{lemma}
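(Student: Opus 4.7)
The plan is to verify the asserted vanishing separately in degrees $i=0$, $i\geq 4$, and $i=3$; degrees $1$ and $2$ are not required by the claim.

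The easy cases come first. For $i\geq 4$, the space $X_C$ is a compact topological $4$-manifold with non-empty boundary, hence has the homotopy type of a CW complex of dimension at most $3$, so $H_i(X_C;\Q(H))=0$ regardless of the coefficient system. For $i=0$, I would use the admissibility of $\phi$: fixing any $i$, the element $h:=\phi(e_i)\in H$ is non-zero, and so $h-1\in\Z[H]$ becomes a unit in the field $\Q(H)$. Since the $i$-th meridian lifts to a loop in the connected space $X_C$ whose image in $H$ is $h$, the coinvariants $H_0(X_C;\Q(H))$ are a quotient of $\Q(H)$ by a submodule containing a unit, and therefore vanish.

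The main obstacle is $i=3$. I would combine Poincar\'e-Lefschetz duality for the compact oriented topological $4$-manifold $X_C$ with the universal coefficient theorem over the field $\Q(H)$, giving
\[ H_3(X_C;\Q(H)) \,\cong\, H^1(X_C,\partial X_C;\Q(H)) \,\cong\, \Hom_{\Q(H)}\bigl(H_1(X_C,\partial X_C;\Q(H)),\,\Q(H)\bigr), \]
so that it suffices to prove $H_1(X_C,\partial X_C;\Q(H))=0$. For this, I would use the long exact sequence of the triple $(X_C,\partial X_C,Y_L)$:
\[ H_1(X_C,Y_L;\Q(H)) \longrightarrow H_1(X_C,\partial X_C;\Q(H)) \longrightarrow H_0(\partial X_C,Y_L;\Q(H)). \]
The left-hand term is $0$ by Corollary \ref{cor:chain_lifting_consequences}(1). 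For the right-hand term, observe that $\partial X_C = Y_L \cup \partial\nu C \cup Y_J$ is connected and inherits a non-trivial coefficient system from $X_C$ (again by admissibility of $\phi$), so $H_0(\partial X_C;\Q(H))=0$; the long exact sequence of the pair $(\partial X_C,Y_L)$ then forces $H_0(\partial X_C,Y_L;\Q(H))=0$. This yields $H_1(X_C,\partial X_C;\Q(H))=0$ and completes the argument.
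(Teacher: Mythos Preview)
Your argument is correct and follows essentially the same route as the paper: the vanishing of $H_0$ via non-triviality of the coefficient system, and the reduction of $H_3$ via Poincar\'e--Lefschetz duality to $H_1(X_C,\partial X_C;\Q(H))=0$, which you then obtain from the long exact sequence of the triple $(X_C,\partial X_C,Y_L)$ exactly as the paper does. The only substantive difference is in degree $4$: the paper uses duality $H_4(X_C;\Q(H))\cong \ol{H_0(X_C,\partial X_C;\Q(H))}$, whereas you appeal to $X_C$ having the homotopy type of a $3$-complex. Your conclusion is correct, but note that in the \emph{topological} category the existence of a handle decomposition (and hence a spine of dimension $\leq 3$) for a $4$-manifold with boundary is not automatic; the duality argument sidesteps this subtlety.
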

\begin{proof}
Since the coefficient system arises from an admissible homomorphism $\phi \colon \Z^k \to H$, it is non-trivial: we therefore have  $H_0(X_C;\Q(H)) \cong 0$.

By the universal coefficient theorem and Poincar\'{e}-Lefschetz duality, we have isomorphisms:
\[H_4(X_C;\Q(H)) \cong H^4(X_C;\Q(H)) \cong \ol{H_0(X_C,\partial X_C;\Q(H))}.\]
The fact that $H_0(X_C;\Q(H)) \cong 0$ together with the long exact sequence of the pair $(X_C,\partial X_C)$, implies that $H_0(X_C,\partial X_C;\Q(H)) \cong 0$, so that as claimed $H_4(X_C;\Q(H)) \cong 0$.

Similarly, we have isomorphisms:
\[H_3(X_C;\Q(H)) \cong H^3(X_C;\Q(H)) \cong \ol{H_1(X_C,\partial X_C;\Q(H))}.\]
We now consider the long exact sequence of the triple $(X_C,\partial X_C, Y_L)$:
\[H_1(X_C,Y_L;\Q(H)) \to H_1(X_C,\partial X_C;\Q(H)) \to H_0(\partial X_C,Y_L;\Q(H)).\]
First, $H_0(\partial X_C;\Q(H)) \cong 0$, again since the coefficient system is non--trivial.  Thence, by the long exact sequence of the pair $(\partial X_C, Y_L)$, $H_0(\partial X_C,Y_L;\Q(H)) \cong 0$.

Moreover, by Corollary \ref{cor:chain_lifting_consequences} (1), $H_1(X_C,Y_L;\Q(H)) \cong 0$.  It follows that the middle homology group $H_1(X_C,\partial X_C;\Q(H)) \cong 0$, which in turn implies from the isomorphisms above that $H_3(X_C;\Q(H)) \cong 0$.  This completes the proof of Lemma \ref{lem:Qt_homology}.
\end{proof}

\begin{lemma}\label{lemma:homology_partial_X_C}
We have an isomorphism
  \[H_1(\partial X_C;\Q(H)) \cong H_1(Y_L;\Q(H)) \oplus H_1(Y_{J};\Q(H)).\]
\end{lemma}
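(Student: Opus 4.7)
The plan is to apply Mayer-Vietoris to a natural decomposition of $\partial X_C$. Since $C$ is an oriented surface in the oriented 4-manifold $S^3\times [0,1]$, its normal bundle is trivial, so $\nu C\cong C\times D^2$ with $\nu L = L\times D^2\subset S^3\times\{0\}$ and $\nu J = J\times D^2\subset S^3\times\{1\}$. Consequently
\[ \partial X_C \;=\; Y_L \,\cup_{L\times S^1}\, (C\times S^1) \,\cup_{J\times S^1}\, Y_J, \]
where the identifications are along the tori $\partial\nu L = L\times S^1$ and $\partial\nu J = J\times S^1$ sitting inside $\partial C\times S^1\subset C\times S^1$.

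I then set $A := Y_L\sqcup Y_J$ and $B := C\times S^1$, so that $A\cap B = (L\sqcup J)\times S^1$ is a disjoint union of $2k$ tori. The Mayer-Vietoris sequence with $\Q(H)$ coefficients reads
\[ \cdots \to H_n(A\cap B;\Q(H)) \to H_n(A;\Q(H))\oplus H_n(B;\Q(H)) \to H_n(\partial X_C;\Q(H)) \to \cdots. \]
The claim reduces to showing that both $H_*(A\cap B;\Q(H))$ and $H_*(B;\Q(H))$ vanish, for then the sequence collapses to the desired isomorphism in every degree, and in particular in degree $1$.

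Both $A\cap B$ and $B$ decompose as disjoint unions of pieces of the form $\Sigma\times S^1$, where $\Sigma$ is a component of $L$, of $J$, or a component $C_i$ of $C$, and in each case the distinguished $S^1$-factor is a meridian of the corresponding component $C_i\subset C$. Under the coefficient homomorphism, this $S^1$-factor is sent to $\phi(e_i)\in H$, which is non-trivial by admissibility of $\phi$; hence $\phi(e_i)-1$ is a unit in the field $\Q(H)$. The Wang sequence (equivalently, the Serre spectral sequence) for the trivial fibration $\Sigma\times S^1 \to S^1$ with $\Q(H)$ coefficients takes the form
\[ \cdots \to H_n(\Sigma;\Q(H)) \xrightarrow{\phi(e_i)-1} H_n(\Sigma;\Q(H)) \to H_n(\Sigma\times S^1;\Q(H)) \to H_{n-1}(\Sigma;\Q(H)) \to \cdots, \]
and since every connecting map $\phi(e_i)-1$ is an isomorphism, each $H_n(\Sigma\times S^1;\Q(H))$ vanishes.

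The main subtlety is verifying that the distinguished $S^1$-factor of each boundary piece really is the meridian of the corresponding component of $C$, so that the coefficient action is indeed multiplication by $\phi(e_i)$; this follows directly from the product structure $\nu C\cong C\times D^2$ together with the compatibility of coefficient systems on $X_C$, $Y_L$, and $Y_J$ provided by Lemma \ref{lem:Zhomology}. With this identification in hand, Mayer-Vietoris immediately yields the claimed isomorphism $H_1(\partial X_C;\Q(H)) \cong H_1(Y_L;\Q(H)) \oplus H_1(Y_J;\Q(H))$.
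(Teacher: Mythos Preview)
Your proof is correct and follows essentially the same approach as the paper: both use the identical decomposition $\partial X_C = Y_L \cup (C\times S^1) \cup Y_J$ and Mayer--Vietoris, reducing the claim to the vanishing of $H_*(C\times S^1;\Q(H))$ and $H_*(\sqcup_k\, S^1\times S^1;\Q(H))$. The only cosmetic difference is that the paper proves this vanishing by observing that the twisted chain complex of each piece is the tensor product of a chain complex with the contractible complex $\Q(H)\xrightarrow{h_i-1}\Q(H)$ of the meridional $S^1$ (citing \cite[Lemma~5.6]{COT04}), whereas you extract the same conclusion from the Wang sequence; these are two phrasings of the same computation.
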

\begin{proof}
  Note that
  \[\partial X_C = Y_L \cup_{\sqcup_k\, S^1 \times S^1} C \times S^1 \cup_{\sqcup_k\, S^1 \times S^1} -Y_{J}.\]
We claim that both $C \times S^1$ and $\sqcup_k\,S^1 \times S^1$ have vanishing $\Q(H)$-homology.  To see this, consider  each connected component of $C \times S^1$ and $\sqcup_k\, S^1 \times S^1$ separately.  Let $h_i \ne 1 \in H$ be the image of the $i$--th basis element $e_i \in \Z^k$ under the admissible homomorphism $\phi$.  As in \cite[Lemma~5.6]{COT04}, the tensor product of any chain complex with the contractible chain complex $\Q(H) \xrightarrow{(h_i-1)} \Q(H)$ of $S^1$ is again contractible.  Therefore the chain complexes $C_*(C_i \times S^1;\Q(H))$ and $C_*(S^1 \times S^1;\Q(H))$ are contractible, which is sufficient to prove the claim.  Therefore the Mayer-Vietoris sequence which calculates the homology of $\partial X_C$ as the union above implies the lemma.
\end{proof}

We are now ready to prove Theorem \ref{mainthm2}.

\begin{proof}[Proof of Theorem \ref{mainthm2}]
Let $C$ be a topological cobordism between links $L$ and $J$. Recall that we write
\[ l:=r(L,\phi)  \mbox{ and } j:=r(J,\phi),\]
and that we assume without loss of generality that $j\geq l$.  By Lemma \ref{lemma:homology_partial_X_C}, $$H_1(\partial X_C;\Q(H)) \cong H_1(Y_L;\Q(H)) \oplus H_1(Y_{J};\Q(H)).$$  It follows that
\[\dim_{\Q(H)}(H_1(\partial X_C;\Q(H))) = l+j.\]
Combining this with Corollary \ref{cor:chain_lifting_consequences} (2), which says that $$\dim_{\Q(H)} (H_1(X_C;\Q(H))) \leq l,$$ we deduce that
\[\dim_{\Q(H)}(\ker(H_1(\partial X_C;\Q(H))\to  H_1(X_C;\Q(H)))) \geq j.\]
By the long exact sequence in twisted homology of the pair $(X_C,\partial X_C)$:
\[H_2(X_C,\partial X_C;\Q(H)) \to H_1(\partial X_C;\Q(H))\to  H_1(X_C;\Q(H)),\]
this implies that
\[\dim_{\Q(H)}(\im(H_2(X_C,\partial X_C;\Q(H)) \to H_1(\partial X_C;\Q(H)))) \geq j,\]
so that
\[\dim_{\Q(H)}(H_2(X_C,\partial X_C;\Q(H))) \geq j.\]
By the universal coefficient theorem and Poincar\'{e}-Lefschetz duality,
\[H_2(X_C;\Q(H)) \cong H^2(X_C;\Q(H)) \cong \ol{H_2(X_C,\partial X_C;\Q(H))},\]
which then implies that
\[\dim_{\Q(H)}(H_2(X_C;\Q(H))) \geq j.\]

We have shown in Lemma \ref{lem:Qt_homology} that $H_i(X_C;\Q(H)) \cong 0$ for $i \neq 1,2$.  This implies that the Euler characteristic is given by: $$\chi^{\Q(H)}(X_C) = \dim_{\Q(H)}(H_2(X_C;\Q(H))) - \dim_{\Q(H)}(H_1(X_C;\Q(H))).$$
From the inequalities $$\dim_{\Q(H)}(H_2(X_C;\Q(H))) \geq j$$ and $$\dim_{\Q(H)} (H_1(X_C;\Q(H))) \leq l,$$ we obtain that
\[\chi^{\Q(H)}(X_C) \geq j-l.\]
Since the Euler characteristic can be calculated without taking homology, from the chain complex of the universal cover of $X_C$, by tensoring with $\Q$ or with $\Q(H)$ and taking the alternating sum of the ranks of the resulting chain groups, the Euler characteristic must be the same with either coefficient system.  So \[\chi^{\Q(H)}(X_C) = \chi^\Q(X_C) = 2g,\] by Corollary \ref{cor:euler_char}.  Therefore
\[2g \geq j-l,\]
which completes the proof of Theorem \ref{mainthm2}, and therefore also of Theorem \ref{mainthm}.
\end{proof}

\end{document}